\def\th@plain{%
  \thm@notefont{}
  \itshape 
}
\def\th@definition{%
  \thm@notefont{}
  \normalfont 
}
\newtheorem{theorem}{Theorem}[section]
\newtheorem{proposition}[theorem]{Proposition}
\newtheorem{lemma}[theorem]{Lemma}
\newtheorem{corollary}[theorem]{Corollary}
\begin{document}
\title{The $p$-adic Valuations of M\"obius Duals of Lucas Sequences\footnotetext{\noindent Supported by National Natural Science Foundation of China, Project 12071421.}
\author{Tyler Ross\footnote{Corresponding author.}  \\ \small
\textit{School of Mathematical Sciences, Zhejiang University}\\
\small \textit{Hangzhou, Zhejiang, 310058, P.\ R.\ China} \\ \small \href{mailto:tylerxross@gmail.com}{\textit{tylerxross@gmail.com}}   
\and Zhongyan Shen \\ \small \textit{Department of Mathematics}, \\ \small \textit{Zhejiang International Studies University}\\
\small \textit{Hangzhou, Zhejiang, 310023, P.\ R.\ China} \\ \small \href{mailto:huanchenszyan@163.com}{\textit{huanchenszyan@163.com}}
\and Tianxin Cai \\ \small \textit{School of Mathematical Sciences, Zhejiang University}\\ \small \textit{Hangzhou, Zhejiang, 310058, P.\ R.\ China} \\ \small \href{mailto:txcai@zju.edu.cn}{\textit{txcai@zju.edu.cn}}}
\date{}
}

\maketitle

\begin{abstract}
        In this paper, we extend the $p$-adic valuations of the M\"obius duals of Lucas sequences, originally obtained by Carmichael for regular Lucas sequences to irregular Lucas sequences. We conclude with a brief observation about the relationships of these valuations to the existence of Wall-Sun-Sun primes.
\end{abstract}

\noindent \textit{Keywords:} Lucas sequence, Sylvester sequence, M\"obius dual, Fibonacci number \\

\noindent \textit{Mathematics Subject Classification 2020:} primary 11B39; secondary 11A25
\section{Introduction} \label{sec1}

For the purposes of this paper, the Lucas sequences
\begin{eqnarray*} 
    U(P,Q) = (U_n(P,Q))_{n \geq 0}, \\
    V(P,Q) = (V_n(P,Q))_{n \geq 0},
\end{eqnarray*}
of the first and second kind respectively, in parameters $P, Q \in \mathbb{Z} \setminus(0)$, are the second order linear recurrence integer sequences given by the Binet forms
\begin{equation*}
    U_n = \frac{\alpha^n - \beta^n}{\alpha - \beta}, ~V_n = \alpha^n + \beta^n    
\end{equation*}
where
\begin{equation*}
    \alpha = \frac{P + \sqrt{D}}{2}, ~\beta = \frac{P-\sqrt{D}}{2}   
\end{equation*}
are the roots of the characteristic polynomial $X^2-PX+Q \in \mathbb{Z}[X]$ with nonzero discriminant $D=P^2-4Q$. We assume moreover that $U$, $V$ are \textit{nondegenerate}, by which we mean that both $U_n, V_n \neq 0$ for all $n \geq 1$, or equivalently that $\alpha/\beta$ is not a root of unity. We do not, on the other hand, require that $U, V$ be \textit{regular} In other words, we allow for the possibility that $(P, Q) > 1$. When $P = 1$, $Q = -1$, we get the familiar Fibonacci numbers and Lucas numbers,
\begin{equation*}
    F = (F_n)_{n \geq 0} = (U_n(1,-1))_{n \geq 0} ~,~ L = (L_n)_{n \geq 0} = (V_n(1,-1))_{n \geq 0}.
\end{equation*}
In the following, we suppress all instances of the parameters $P$, $Q$ when these are taken to be arbitrary but fixed.

In order to study the divisibility properties of regular Lucas sequences, Carmichael worked with the sequences $M(P,Q) =M = (M_n)_{n \geq 1}$, given by the homogenized cyclotomic polynomials
\begin{equation*}
    M_n = \beta^{\varphi(n)}\Phi_n(\alpha/\beta),
\end{equation*}
where $\varphi: \mathbb{N} \longrightarrow \mathbb{N}$ is Euler's totient function and $\Phi_n \in \mathbb{Z}[X]$ is the $n$-th cyclotomic polynomial. Sequences of this form are sometimes referred to as Sylvester sequences (see \cite{Car}, \cite{Syl}). 

A straightforward calculation shows that $M_1 = \alpha - \beta$, and
\begin{equation*}
    U_n = \prod_{\substack{d \mid n \\ d > 1}}M_d
\end{equation*}
for $n > 1$. If $U$ is nondegenerate, then it follows by M\"obius inversion that
\begin{equation*}
    M_n = \prod_{d \mid n} U_d ^{\mu(n/d)}
\end{equation*}
for $n > 1$, where $\mu: \mathbb{N} \longrightarrow \{-1,0,1\}$ is the M\"obius $\mu$-function. 

More generally, if
\begin{equation*}
    A = n \longmapsto A_n: \mathbb{N} \longrightarrow S    
\end{equation*}
is any sequence taking values in a multiplicative subset $S$ of a commutative ring $\mathcal{R}$, we define the \textit{M\"obius dual}
\begin{equation*}
    M^A = n \longmapsto M^A_n: \mathbb{N} \longrightarrow S^{-1}\mathcal{R}
\end{equation*}
to be the sequence
\begin{equation*}
    M^{A}_{n} = \prod_{d \mid n} A_{d}^{\mu(n/d)};
\end{equation*}
equivalently (by M\"obius inversion), $M^A$ is uniquely determined by the relations
\begin{equation*}
    A_n = \prod_{d \mid n}M^A_d
\end{equation*}
for all $n \geq 1$.

When $U$ is a nondegenerate Lucas sequence, the sequences $M$ and $M^U$ are related by
\begin{equation*}
    \begin{cases}
        M_1 &= (\alpha-\beta)M_1^U, \\
        M_n &= M^U_n, \text{ if } n > 1.
    \end{cases}
\end{equation*}

It is clear that $M^A$ does not, in general, take values in integers for arbitrary integer sequences $A$; it turns out, however, that for any pair $U$, $V$ of Lucas sequences, $M^U$ is always an integer sequence, while $M^V_n \in\mathbb{Z}$ for all odd $n \geq 1$ and at most finitely many even $n$.

In this paper, we gather for reference several basic results about the sequences $M^U, M^V$; in particular, we extend the $p$-adic valuations for the sequences $M^U$ obtained by Carmichael under the hypothesis that $U$ is regular to irregular Lucas sequences. In a forthcoming paper, the authors make use of these results to obtain some congruences for both the sequences $M^U, M^V$, as well as the corresponding Lucas sequences, and to derive constraints on the entry point behavior of primes in Lucas sequences. We conclude with a brief observation relating these valuations to the existence of Wall-Sun-Sun primes.

\section{Results} \label{sec2}

We first derive a simple but useful doubling formula for the M\"obius dual sequences.

\begin{proposition}[Doubling formula] \label{dbl-thm}
For $n \geq 1$,
\begin{equation*} \label{dbleq0}
    M^{U}_{2n} = 
    \begin{cases}
        
        M^{V}_{n}, &\text{ \emph{if} } n \text{ \emph{is odd}.} \\
        M^{V}_{n}M^{U}_{n}, &\text{ \emph{if} } n \text{ \emph{is even},}
    \end{cases}
\end{equation*}
\end{proposition}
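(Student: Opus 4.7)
The proof rests on the Binet-level identity
\[U_{2n} = U_n V_n,\]
which follows at once from the factorization $\alpha^{2n}-\beta^{2n}=(\alpha^n-\beta^n)(\alpha^n+\beta^n)$. The strategy is to expand the defining product for $M^U_{2n}$ directly and group the divisors of $2n$ according to their $2$-adic valuation, using $\mu(2n/d)=0$ to discard almost all terms.

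Writing $n=2^k m$ with $m$ odd, a divisor $d$ of $2n$ contributes to $M^U_{2n}=\prod_{d\mid 2n} U_d^{\mu(2n/d)}$ only if $2n/d$ is squarefree, so the only surviving $d$ are of the form $2^k e$ or $2^{k+1} e$ with $e\mid m$. Since $m/e$ is odd, $\mu(2m/e)=-\mu(m/e)$, and pairing the two families of divisors yields
\[M^U_{2n} \;=\; \prod_{e\mid m} \left(\frac{U_{2^{k+1}e}}{U_{2^k e}}\right)^{\mu(m/e)} \;=\; \prod_{e\mid m} V_{2^k e}^{\mu(m/e)},\]
after substituting $U_{2N}=U_N V_N$ with $N=2^k e$.

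If $n$ is odd, then $k=0$ and this product is literally $M^V_n=\prod_{e\mid n} V_e^{\mu(n/e)}$, giving the first branch. If $n$ is even, then $k\geq 1$, and I would perform the analogous $2$-adic grouping on the divisors of $n$ alone, obtaining the companion formulas
\[M^U_n \;=\; \prod_{e\mid m} V_{2^{k-1}e}^{\mu(m/e)}, \qquad M^V_n \;=\; \prod_{e\mid m} \left(\frac{V_{2^k e}}{V_{2^{k-1}e}}\right)^{\mu(m/e)};\]
multiplying these telescopes to exactly the expression displayed for $M^U_{2n}$, giving the second branch.

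I do not anticipate a substantive obstacle. The main bookkeeping is to verify that the squarefree condition $\mu(2n/d)\neq 0$ pins $d$ down to the two families above and to track the signs $\mu(2n/d)$ across the factor of $2$; the borderline case $k=1$ in the companion formula for $M^V_n$ deserves a brief sanity check because $V_{2^{k-1}e}=V_e$ already appears in the $M^U_n$ factor, but the arithmetic is identical to $k\geq 2$. Once $U_{2n}=U_n V_n$ is in hand, the rest is a clean telescoping exercise.
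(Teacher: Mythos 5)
Your proposal is correct and follows essentially the same route as the paper: both rest on the identity $U_{2n}=U_nV_n$, use the squarefree support of $\mu(2n/d)$ to pair divisors $2^k e$ and $2^{k+1}e$ with the sign flip $\mu(2m/e)=-\mu(m/e)$, and in the even case telescope $\prod_{e\mid m}V_{2^k e}^{\mu(m/e)}$ into $M^V_n M^U_n$. The only cosmetic difference is that you state the companion product formulas for $M^U_n$ and $M^V_n$ explicitly before multiplying, whereas the paper peels off $M^V_n$ first and then recognizes the remainder as $M^U_n$ via $V_N=U_{2N}/U_N$.
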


The next result gives the $p$-adic valuations of the numbers $M^U_n$ for all primes $p$, and all $n \geq 1$; in light of Proposition \ref{dbl-thm}, this determines also the $p$-adic valuations of the numbers $M^V_n$ $(n \geq 1)$. When $U$ is regular, in which case the first condition $p \nmid (P,Q)$ in the theorem below is automatically satisfied, these valuations agree with the analysis in Carmichael's original treatment of the subject (see \cite{Car}).

For $p$ prime, we write
\begin{equation*}
    z_U(p) = \text{min}(n \geq 1:p \mid U_n)
\end{equation*}
for the \textit{entry point}, or \textit{rank of apparition}, of $p$ in $U$. It is easy to verify that this number always exists, except in the case that $p \nmid P$ and $p \mid Q$ (see Proposition \ref{vpu-prop}). When $n=z_U(p)$, we say that $p$ is a \textit{characteristic factor} of $U_n$.

It is also convenient to introduce the notation
\begin{equation*}
    \partial_p(m)=m/p^{v_p(m)}
\end{equation*}
for the $p$-free part of $m \in \mathbb{Z} \setminus (0)$.

\begin{theorem} \label{vpM-thm}
    If $p \nmid (P,Q)$, we have the following cases.
    \begin{enumerate}[label = (\alph*)]
    \item \label{vpM0} If $p \mid Q$, then $v_p(M^U_n)=0$ for all $n \geq 1$.
    \item \label{vpM1} If $p \mid D$, then
    \begin{equation*} \label{vpMeq1}
        v_p(M^U_n) =
        \begin{cases}
         v_p(U_p), &\text{\emph{if} } n = p, \\
         1, &\text{\emph{if} } n = p^k,~ k >1, \\
         0, &\text{\emph{otherwise}}.
        \end{cases}
    \end{equation*}
    \item \label{vpM2} If $p \nmid QD$, then
    \begin{equation*}
        v_p(M^U_n) =
        \begin{cases}
            v_p(U_{z_U(p)}), &\text{\emph{if }} n=z_U(p), \\
            v_p(U_{pz_U(p)})-v_p(U_{z_U(p)}), &\text{\emph{if }} n = pz_U(p), \\
            1, &\text{\emph{if }} n =p^kz_U(p),~k >1, \\
            0, &\text{\emph{otherwise.}}
        \end{cases}
    \end{equation*}
    \end{enumerate}
    If $p \mid (P, Q)$, then we have the following cases.
    \begin{enumerate} [label = (\alph*)] \setcounter{enumi}{3}
        \item \label{vpm3} If $v_p(Q) \geq 2v_p(P)$, then
        \begin{equation*}
            v_p(M^U_n)=
            \begin{cases}
                0, &\text{\emph{if} } n = 1, \\
                \varphi(n)v_p(P)+v_p(M^{U^{(p)}}_n), &\text{\emph{if }} n > 1,
            \end{cases}
        \end{equation*}
        where $U^{(p)}=U(\partial_p(P),\partial_p(Q))$.
        \item \label{vpm4} If $2v_p(P) > v_p(Q)$, then 
        \begin{equation*}
            v_p(M^U_n) =
            \begin{cases}
                v_p(P), &\text{\emph{if n = 2}},\\
                \left(\varphi(n)/2\right)v_p(Q)+1, &\text{\emph{if }} n = 2p^k, ~ p \text{\emph{ prime }},~k\geq 1, \\
                \lfloor \varphi(n)/2 \rfloor v_p(Q), &\text{\emph{otherwise,}}
            \end{cases}
        \end{equation*}
        unless $2v_p(P)=v_p(Q)+1$, $p = 2$ or $3$, $n = 2p$ in which case
        \begin{align*}
            v_p(M^U_{2p})=v_p(Q)+1+v_p(\partial_p(P)^2-\partial_p(Q)). 
        \end{align*}
    \end{enumerate}
\end{theorem}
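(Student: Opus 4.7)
The plan is to take $p$-adic valuations of the Möbius product definition
\[
v_p(M^U_n) = \sum_{d \mid n} \mu(n/d)\, v_p(U_d),
\]
and in each case plug in an explicit formula for $v_p(U_d)$ and collapse the alternating sum using the standard Möbius identities $\sum_{d \mid n} \mu(n/d) = [n=1]$ and $\sum_{d \mid n} \mu(n/d)\, d = \varphi(n)$ (so $\sum_{d \mid n} \mu(n/d)(d-1) = \varphi(n)$ for $n>1$).

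For the regular cases (a)--(c), where $p \nmid (P,Q)$, I would invoke the classical Lucas/Carmichael valuation formulas (the content of the promised Proposition~\ref{vpu-prop}, essentially Lifting the Exponent applied to $\alpha^n - \beta^n$). Case (a) is immediate since $p \mid Q$ with $p \nmid P$ forces $U_d \equiv P^{d-1} \pmod p$, so $v_p(U_d) = 0$ for all $d$. In (b) and (c), $v_p(U_d)$ depends only on whether $p \mid d$ (resp.\ $z_U(p) \mid d$) and on $v_p(d)$ (resp.\ $v_p(d/z_U(p))$); slicing the divisors of $n$ by these two conditions and applying Möbius, all contributions cancel except at the finitely many ``first'' divisors $p^k$ or $p^k z_U(p)$ listed in the statement, reproducing Carmichael's formulas verbatim.

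For the irregular cases (d), (e), I would work from the explicit polynomial form
\[
U_d = \sum_{i=0}^{\lfloor (d-1)/2 \rfloor} \binom{d-1-i}{i}(-Q)^i P^{d-1-2i}
\]
and substitute $P = p^a P'$, $Q = p^b Q'$. The $i$th term has $p$-valuation at least $(d-1)a + i(b - 2a)$, so in case (d) ($b \geq 2a$) the $i=0$ term dominates: one factors $p^{a(d-1)}$ out of $U_d$ and identifies the cofactor with the analogous Lucas-sequence polynomial in reduced parameters, reducing the computation to the previous cases plus an additive $a(d-1)$ per term. Möbius then contributes $a\varphi(n)$ for $n>1$ and leaves $v_p(M^{U^{(p)}}_n)$ as claimed. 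In case (e) ($b < 2a$) the maximum-$i$ term dominates instead, with $v_p(U_d) = b(d-1)/2$ for $d$ odd and $v_p(U_d) = b(d/2-1) + a + v_p(d/2)$ for $d$ even; the parity-split Möbius sum produces $\lfloor \varphi(n)/2 \rfloor v_p(Q)$, together with the $n=2$ and $n=2p^k$ corrections that come from the extra $a$ on even divisors and the $v_p(d/2)$ term contributing only through divisors of the form $2p^k$.

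The main obstacle is the exceptional sub-case of (e) where $2v_p(P) = v_p(Q) + 1$, $p \in \{2,3\}$, and $n = 2p$: here the two lowest-valuation terms in the expansion of $U_{2p}$ --- the $i = p-1$ term (pure $Q$-power times a single $P$) and the $i = p-2$ term, which picks up an extra factor of the binomial $\binom{p}{p-2}$ containing $p$ exactly when $p \in \{2,3\}$ --- acquire the \emph{same} $p$-valuation and can cancel. Resolving the actual valuation requires expanding $U_{2p}$ modulo one higher power of $p$ and reading off the residue $\partial_p(P)^2 - \partial_p(Q)$, whose valuation is precisely the ``correction'' $v_p(\partial_p(P)^2 - \partial_p(Q))$ in the theorem; I expect this bookkeeping to be the most delicate step, since one has to verify that no other terms contribute at this precision and that the congruence $2v_p(P) = v_p(Q)+1$ combined with $p \leq 3$ is exactly what forces the collision.
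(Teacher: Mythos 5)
Your overall strategy is the one the paper uses: write $v_p(M^U_n)=\sum_{d\mid n}\mu(n/d)v_p(U_d)$, substitute a closed form for $v_p(U_d)$, and collapse the sum with $\sum_{d\mid n}\mu(n/d)$ and $\sum_{d\mid n}\mu(n/d)d=\varphi(n)$; the paper merely telescopes the sum as $\sum_{d\mid N}\mu(N/d)\bigl(v_p(U_{p^kd})-v_p(U_{p^{k-1}d})\bigr)$ for $n=p^kN$, which is a cosmetic difference. For cases (a)--(c) you invoke exactly the paper's Proposition~\ref{vpu-prop}, so the arguments coincide. The genuine divergence is in (d)--(e): the paper takes the special-prime valuations of $U_d$ as a black box from Ballot, whereas you propose to re-derive them from the expansion $U_d=\sum_i\binom{d-1-i}{i}(-Q)^iP^{d-1-2i}$. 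That is a legitimate, more self-contained route, and your identification of the dominant term (smallest $i$ when $v_p(Q)\geq 2v_p(P)$, largest $i$ otherwise) and of the exceptional collision at $2v_p(P)=v_p(Q)+1$, $p\in\{2,3\}$, $n=2p$ is the right mechanism; the ensuing M\"obius bookkeeping in (e) reproduces the paper's computation.

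There is, however, one step in your sketch of case (d) that does not go through as written. After factoring $p^{(d-1)v_p(P)}$ out of $U_d$, the cofactor is the Lucas polynomial evaluated at the parameters $\bigl(\partial_p(P),\,p^{v_p(Q)-2v_p(P)}\partial_p(Q)\bigr)$, not at $\bigl(\partial_p(P),\partial_p(Q)\bigr)$; these agree only when $v_p(Q)=2v_p(P)$. When the inequality is strict, the cofactor's $Q$-parameter is still divisible by $p$ while its $P$-parameter is not, so by case (a) the cofactor has valuation $0$ and your derivation yields $v_p(U_d)=(d-1)v_p(P)$ exactly, hence $v_p(M^U_n)=\varphi(n)v_p(P)$ for $n>1$ --- which differs from the asserted $\varphi(n)v_p(P)+v_p(M^{U^{(p)}}_n)$ whenever $p\mid M^{U^{(p)}}_n$ (for instance $P=3$, $Q=108$, $n=3$ gives $v_3(U_3)=v_3(-99)=2$, while $2v_3(P)+v_3(U_3(1,4))=2+v_3(-3)=3$). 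So the ``identification with the reduced sequence'' is asserted precisely where it fails, and you must split (d) into the sub-cases $v_p(Q)=2v_p(P)$ and $v_p(Q)>2v_p(P)$. Note that the same tension is present in the statement of Proposition~\ref{vpu-prop}(d) that the paper's proof relies on, so this point deserves to be resolved explicitly rather than inherited.
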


\begin{theorem} \label{int-thm}
    For any pair of Lucas sequences $U$, $V$, both $M^U_n, ~M^V_{2n-1} \in \mathbb{Z}$ for all $n \geq 1$, and $M^V_{2n} \in \mathbb{Z}$ for at most finitely many $n \geq 1$. In particular, if $U$ is regular and $M^V_{2n} \in \mathbb{Z}$, then $n \leq 6$, if $D > 0$,  and $n \leq 15$, if $D < 0$.
\end{theorem}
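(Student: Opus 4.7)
The plan is to establish three assertions in turn: integrality of $M^U_n$ for all $n \geq 1$, integrality of $M^V_{2n-1}$ for all $n \geq 1$, and finiteness (with explicit bounds in the regular case) of the set of even $m$ for which $M^V_m \in \mathbb{Z}$. The first two are short: $M^U_1 = U_1 = 1$, and for $n \geq 2$ the quantity $M^U_n = M_n = \beta^{\varphi(n)} \Phi_n(\alpha/\beta) = \prod_\zeta(\alpha - \zeta\beta)$ (with $\zeta$ ranging over primitive $n$-th roots of unity) is symmetric in $\alpha$ and $\beta$, using the self-reciprocity of $\Phi_n$ for $n \geq 2$, hence lies in $\mathbb{Z}[P,Q] \subseteq \mathbb{Z}$. (Equivalently, every case of Theorem \ref{vpM-thm} yields visibly non-negative valuations $v_p(M^U_n)$.) Integrality of $M^V_m$ at odd $m$ then follows at once from the doubling formula $M^V_m = M^U_{2m}$ of Proposition \ref{dbl-thm}.

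For even $m$ the doubling formula reads $M^V_m = M^U_{2m}/M^U_m$, so $M^V_m \in \mathbb{Z}$ iff $v_p(M^U_m) \leq v_p(M^U_{2m})$ for every prime $p$. The core idea is that this inequality fails whenever $U_m$ admits a primitive prime divisor, i.e.\ a prime $p$ with $z_U(p) = m$, which is odd and coprime to $2QD$: case \ref{vpM2} of Theorem \ref{vpM-thm} then gives $v_p(M^U_m) = v_p(U_m) \geq 1$, while $v_p(M^U_{2m}) = 0$ because $p$ odd prevents $2m$ from being any of $m$, $pm$, or $p^k m$. The Bilu--Hanrot--Voutier theorem supplies such a $p$ for every sufficiently large $m$ outside an explicit finite exceptional set; the finitely many primes that do divide $2QD$ can each serve as a primitive divisor for at most one index, so the set of exceptional $m$ stays finite.

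In the regular case, I would invoke the sharper classical results --- Carmichael's 1913 theorem ($U_m$ has a primitive prime divisor for all $m > 12$ when $D > 0$) and Bilu--Hanrot--Voutier (for $m > 30$ when $D < 0$) --- and verify that for $m \geq 4$ even the primitive divisor must automatically satisfy $p \nmid 2QD$, so Theorem \ref{vpM-thm}\ref{vpM2} applies: $p \mid Q$ is excluded because $(P,Q) = 1$ makes $U_k \equiv P^{k-1} \pmod{Q}$, forcing $p \nmid U_k$ for every $k$; $p \mid D$ with $p$ odd gives $z_U(p) = p$, making $m$ an odd prime; and $p = 2$ is excluded because $z_U(2) \in \{2, 3\}$ whenever it is defined in the regular case. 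Combining these with the doubling argument gives $M^V_m \notin \mathbb{Z}$ for every even $m$ beyond $12$ (resp.\ $30$), which translates to the stated bounds $n \leq 6$ (resp.\ $n \leq 15$) on $m = 2n$. I expect the chief obstacle to be this case analysis ensuring that primitive divisors land in case \ref{vpM2}, together with the appeal to Bilu--Hanrot--Voutier; for the general irregular statement one further observes that only finitely many indices can receive a non-generic contribution from the finite set of primes dividing $(P,Q)$.
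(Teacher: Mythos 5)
Your argument follows the paper's proof almost step for step: integrality of $M^U_n$ from the identity $M^U_n=\beta^{\varphi(n)}\Phi_n(\alpha/\beta)$ (the paper notes the common value is a rational algebraic integer, you note it is symmetric in $\alpha,\beta$ --- same one-line observation), integrality at odd indices via the doubling formula, non-integrality of $M^V_m=M^U_{2m}/M^U_m$ at even $m$ whenever $U_m$ has an odd characteristic factor landing in case~\ref{vpM2} of Theorem~\ref{vpM-thm}, and the bounds $12$ and $30$ from Carmichael and Bilu--Hanrot--Voutier in the regular case. The one loose end is the general irregular statement: Bilu--Hanrot--Voutier applies only to regular pairs, so for $(P,Q)>1$ the existence of a characteristic factor of $U_m$ for all but finitely many $m$ is not supplied by your remark about primes dividing $(P,Q)$ or $2QD$ (that remark only handles characteristic factors that happen to divide $2QD$, not the existence question); the paper covers this by invoking Proposition~\ref{charfact-prop} (Durst's extension to irregular sequences) at exactly this point, and you should cite that instead.
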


\section{Auxiliary Results} \label{sec3}
In this section, we gather some auxiliary results that we will need for the proofs of the main results. We make use without citation of the following facts from elementary number theory:
\begin{align*}
    &\sum_{d \mid n}\mu(d) =
    \begin{cases}
        1, &\text{if } n = 1, \\
        0, &\text{if } n > 1, 
    \end{cases} \\
    &\sum_{d \mid n}\mu(n/d)d = \varphi(n),
\end{align*}
for all $n \geq 1$.

Next, we recall in Lemma \ref{dbl-lem} and Proposition \ref{vpu-prop} a few basic facts concerning Lucas sequences.

\begin{lemma}[Doubling formula, \cite{Ball}, \cite{Luc}] \label{dbl-lem} For any integer $n \geq 1$,
    \begin{align*}
        \label{dbleq1} U_{2n} &= V_nU_n.
    \end{align*}
\end{lemma}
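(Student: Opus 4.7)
The plan is to read off the identity directly from the Binet forms given in Section \ref{sec1}. The essential observation is the elementary factorization
\begin{equation*}
\alpha^{2n}-\beta^{2n} = (\alpha^n-\beta^n)(\alpha^n+\beta^n),
\end{equation*}
which splits the numerator of $U_{2n}$ into the two factors appearing in the Binet expressions of $U_n$ and $V_n$. Dividing by $\alpha-\beta$, which is nonzero since $D = P^2-4Q \neq 0$, then yields $U_{2n} = V_n U_n$ in a single line. The computation takes place in the quadratic extension $\mathbb{Z}[\alpha,\beta]$ (or in its field of fractions, if one prefers), and since both sides are \emph{a priori} integers, the identity descends to $\mathbb{Z}$ automatically.

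There is essentially no obstacle to executing this plan. One could alternatively prove the identity by induction on $n$, using the common recurrence $X_{n+1} = PX_n - QX_{n-1}$ satisfied by both $U$ and $V$ together with an addition-law identity such as $U_{m+n} = U_m V_n + U_n V_m$, but this is strictly longer and less transparent than the Binet computation. Since the result holds universally in the formal parameters $P, Q$, no case analysis on the regularity, degeneracy, or sign of the discriminant of $U$ is required; the identity is a pure algebraic consequence of the Binet form, which is why it is typically invoked (as here) without comment in the classical literature \cite{Ball}, \cite{Luc}.
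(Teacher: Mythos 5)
Your proof is correct: the factorization $\alpha^{2n}-\beta^{2n}=(\alpha^n-\beta^n)(\alpha^n+\beta^n)$ combined with the Binet forms immediately gives $U_{2n}=V_nU_n$, and the division by $\alpha-\beta$ is legitimate since $D\neq 0$. The paper itself offers no proof at all --- it simply recalls this identity as a known fact from \cite{Ball} and \cite{Luc} --- so your one-line Binet computation supplies exactly the standard argument the authors are implicitly relying on.
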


The calculation of the $p$-adic valuation for the sequences $M^U$ relies on the corresponding $p$-adic valuations for the sequences $U$. The fully general case, allowing for the possibility that $(P,Q) > 1$ was sorted out by Ballot (\cite{Ball1}, \cite{Ball}). The situation when $U$ is regular appears in more or less complete form scattered across Lucas's original treatment (\cite{Luc}) of the subject, and seems to have been rederived in various guises many times over since. Particularly concise formulations, after which the following proposition is modeled, were obtained by Sanna (\cite{Sanna}) for general Lucas sequences, and Lengyel (\cite{Leng}) in the special case $F=U(1,-1)$ of the Fibonacci numbers.

\begin{proposition}[Laws of appearance and repetition, \cite{Ball1}, \cite{Ball}, \cite{Luc}, \cite{Sanna}] \label{vpu-prop}
    Let $U$ be any Lucas sequence, $p$ a prime. If $p \nmid (P,Q)$, then we have the following cases. 
\begin{enumerate}[label = (\alph*)]
    \item \label{vpu0} If $p \mid Q$, then $v_p(U_n)=0$ for all $n \geq 1$.
    \end{enumerate}
Otherwise, if $p\nmid Q$, then $z_U(p)$ exists, and $z_U(p) \mid p -\left(\frac{D}{p} \right)$, unless $p=2$ does not divide $D$, in which case $z_U(p)=3$; in particular, $p \mid z_U(p)$ if and only if $p = z_U(p)$ if and only if $p \mid D$. Moreover, $v_p(U_{pz_U(p)}) \geq v_p(U_{z_U(p)}) + 1$, with equality if $p > 2$. We have the following valuations.
    \begin{enumerate}[label = (\alph*)] \setcounter{enumi}{1}
    \item \label{vpu1} If $p \mid D$, then
    \begin{equation*}
    v_p(U_n) =
    \begin{cases}
        v_p(U_p)+v_p(n)-1, &\text{\emph{if }} p \mid n,\\
        0, &\text{\emph{if }} p \nmid n.
    \end{cases}
    \end{equation*}
    \item \label{vpu2} If $p \nmid D$, then
    \begin{equation*}
    v_p(U_n) =
    \begin{cases}
        v_p(U_{pz_U(p)})+v_p(n)-1, &\text{\emph{if }} z_U(p) \mid n,~ p \mid n,\\
        v_p(U_{z_U(p)}), &\text{\emph{if }} z_U(p) \mid n,~ p \nmid n, \\
        0, &\text{\emph{if }} z_U(p) \nmid n.
    \end{cases}
    \end{equation*}
\end{enumerate}
If $p \mid (P, Q)$, then $p \mid U_n$ for every $n \geq 2$. We have the following cases.
\begin{enumerate}[label = (\alph*)] \setcounter{enumi}{3}
    \item If $v_p(Q) \geq 2v_p(P)$, then
    \begin{equation*}
        v_p(U_n)=(n-1)v_p(P)+v_p(U^{(p)}_n)
    \end{equation*}
    for all $n \geq 1$, where $U^{(p)}=U(\partial_p(P),\partial_p(Q))$.
    \item If $2v_p(P) > v_p(Q)$, then
    \begin{equation*}
        v_p(U_n)=
        \begin{cases}
        v_p(Q)\cdot \frac{n-1}{2}, &\text{\emph{if }} n \text{\emph{ is odd}}, \\
        v_p(Q)\cdot\frac{n}{2}+v_p(\frac{n}{2})+v_p(P)-v_p(Q)+h, &\text{\emph{if }} n \text{\emph{ is even}},
        \end{cases}
    \end{equation*}
    where
    \begin{equation*}
        h =
        \begin{cases}
           v_p(\partial_p(P)^2-\partial_p(Q)), &\text{\emph{if }} 2 \leq p \leq 3, ~v_p(Q)=2v_p(P)-1,~p\mid n, \\
           0, &\text{\emph{otherwise.}}
        \end{cases}
    \end{equation*}
\end{enumerate}
    
\end{proposition}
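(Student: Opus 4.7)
The plan is to handle the regular case $p \nmid (P,Q)$ first and then reduce the irregular case to it. For (a), reducing the recurrence $U_n = P U_{n-1} - Q U_{n-2}$ modulo $p$ (when $p \mid Q$) gives $U_n \equiv P^{n-1} \pmod p$, and since $p \nmid P$, this yields $v_p(U_n) = 0$.

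For $p \nmid Q$, I would first establish the existence of $z_U(p)$ by observing that the pairs $(U_n, U_{n+1}) \pmod p$ are purely periodic --- the recurrence is invertible modulo $p$ because $p \nmid Q$ --- so starting from $(0, 1)$ one must eventually return to that pair and hence see $U_n \equiv 0$ for some $n \geq 1$. The equivalence $z_U(p) \mid n \iff p \mid U_n$ then follows from the addition identity $U_{m+n} = U_{m+1} U_n - Q U_m U_{n-1}$. The bound $z_U(p) \mid p - (D/p)$ comes from computing $\alpha^p$ in $\mathbb{F}_p$ or $\mathbb{F}_{p^2}$: if $D$ is a square mod $p$, then $\alpha, \beta \in \mathbb{F}_p^\times$ and Fermat gives $U_{p-1} \equiv 0$; if not, Frobenius swaps the roots, forcing $\alpha^{p+1} = \beta^{p+1} = \alpha\beta = Q$ and $U_{p+1} \equiv 0$. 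The exceptional case $p = 2$, $2 \nmid D$ is a direct check that $U_3 = P^2 - Q$ is even exactly when $P, Q$ are both odd, i.e.\ when $2 \nmid Q$.

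For the valuation formulas (b) and (c), the plan is a lifting-the-exponent style induction on $v_p(n)$. In (b), $p \mid D$ forces $\alpha \equiv \beta$ modulo a prime above $p$ (with $p = 2$ handled separately since $2 \mid D$ forces $2 \mid P$), so the factorization $U_n = \sum_{k=0}^{n-1} \alpha^k \beta^{n-1-k}$ yields $U_n \equiv n \alpha^{n-1}$ to first order, producing the stated formula after a careful second-order expansion of $\alpha^n - \beta^n$. In (c), $p \nmid D$, the base step is to show $v_p(U_n) = v_p(U_{z_U(p)})$ whenever $z_U(p) \mid n$ and $p \nmid n$, using the divisibility-sequence property and a norm argument, and the induction step is the single-step jump $v_p(U_{pm}) - v_p(U_m) = 1$ for $p > 2$ and $z_U(p) \mid m$, derived from an identity factoring $U_{pm}/U_m$ over the $p$-th roots of unity; the $p = 2$ case needs a separate initial computation of $v_2(U_{2 z_U(2)})$.

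For (d) and (e), when $p \mid (P, Q)$, the strategy is to factor powers of $p$ out of the roots. Setting $s = v_p(P)$ and $t = v_p(Q)$, in case (d) with $t \geq 2s$, the substitution $\alpha = p^s \alpha'$, $\beta = p^s \beta'$ recasts the recurrence for $U(P, Q)$ in terms of a Lucas sequence with reduced $p$-adic weight, and unwinding returns the computation to the regular sub-cases (a)--(c). In (e), $2s > t$, the two roots have distinct $p$-adic valuations and must be tracked separately, with the parity split on $n$ emerging from the interaction of $\alpha^n, \beta^n$ in $U_n$. The main obstacle is the anomalous sub-case $2s = t + 1$, $p \in \{2, 3\}$, $n = 2p$, where a second-order cancellation makes the formula depend on $v_p(\partial_p(P)^2 - \partial_p(Q))$; I would handle this by hand using the doubling identity $U_{2p} = V_p U_p$ from Lemma \ref{dbl-lem} together with a careful binomial expansion of $V_p = \alpha^p + \beta^p$, keeping track of the lowest-order surviving term as in Ballot's treatment.
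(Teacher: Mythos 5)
The paper does not actually prove this proposition: it is quoted as known, with the regular case attributed to Lucas and Sanna and the special-prime case to Ballot, so there is no in-paper argument to compare yours against. Your sketch follows the same standard route as those sources (pure periodicity of the pairs $(U_n,U_{n+1})$ modulo $p$ for the existence of $z_U(p)$, the Frobenius computation of $\alpha^p$ for the bound $z_U(p)\mid p-\left(\frac{D}{p}\right)$, a lifting-the-exponent induction for the laws of repetition, and a rescaling of the roots for special primes), and parts \ref{vpu0}--\ref{vpu2} are sketched correctly, modulo the usual care needed at $p=2$ which you acknowledge.

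Your treatment of the special-prime cases (d) and (e), however, contains a concrete error. When $2v_p(P)>v_p(Q)$ (case (e)), the Newton polygon of $X^2-PX+Q$ is a single segment, so the two roots have \emph{equal} valuation $v_p(Q)/2$ --- not distinct valuations as you assert. The entire difficulty of case (e) is precisely that $\alpha^n$ and $\beta^n$ have the same valuation, so $v_p(\alpha^n-\beta^n)$ is governed by cancellation between terms of equal size; that cancellation is what produces the parity split and the anomalous subcase. It is case (d) with $v_p(Q)>2v_p(P)$ in which the roots have distinct valuations $v_p(P)$ and $v_p(Q)-v_p(P)$. Relatedly, in case (d) your rescaling $\alpha=p^{v_p(P)}\alpha'$ yields $U_n(P,Q)=p^{(n-1)v_p(P)}\,U_n\bigl(\partial_p(P),\,Q/p^{2v_p(P)}\bigr)$, and the second factor is $U^{(p)}_n=U_n(\partial_p(P),\partial_p(Q))$ only when $v_p(Q)=2v_p(P)$; when the inequality is strict, part \ref{vpu0} applied to the rescaled sequence gives residual valuation $0$, which need not equal $v_p(U^{(p)}_n)$ (for $P=5$, $Q=-125$, $n=5$ one has $U_5=5^4\cdot 41$, whereas $(n-1)v_5(P)+v_5(F_5)=5$). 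So your method, carried out correctly, does not land on the formula as stated; you would need to address this discrepancy --- the resolution being that the reduced parameter should be $Q/p^{2v_p(P)}$ rather than $\partial_p(Q)$, as in Ballot's formulation --- before case (d) can be considered proved.
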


The integrality conditions for $M^V$ follow from Carmichael's theorem and its extension, almost a century later, to the complex case by Bilu, Hanrot, and Voutier. We omit some detailed case analyses from both theorems in favor of simplicity, similarly leaving out such case analysis from Theorem \ref{int-thm}.

\begin{theorem}[Carmichael's theorem, \cite{Car}] \label{cml-thm1}
    If $U$ is nondegenerate and regular, with $D > 0$, then $U_n$ has a characteristic factor for all $n > 12$.
\end{theorem}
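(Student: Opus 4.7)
The plan is to exploit the factorization $U_n = \prod_{d \mid n,\, d > 1} M_d$ together with the $p$-adic analysis in Theorem \ref{vpM-thm} to show that the Sylvester factor $M_n$ must contain a prime $p$ with $z_U(p) = n$ once $n > 12$. The argument splits into a structural step, which pins down exactly which primes can divide $M_n$ without being characteristic factors of $U_n$, and a size step, which shows that $|M_n|$ grows too fast to be accounted for purely by those "intrinsic" primes.

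First I would invoke Theorem \ref{vpM-thm}(a)--(c), which cover the regular case $p \nmid (P,Q)$. These imply that any prime $p$ with $v_p(M_n) > 0$ satisfies either $n = z_U(p)$, in which case $p$ is already a characteristic factor of $U_n$ and we are done, or $n = p^k z_U(p)$ for some $k \geq 1$, in which case $p \mid n$. In the second alternative, $v_p(M_n) = 1$ whenever $k > 1$, and when $k = 1$ the exponent equals $v_p(U_{pz_U(p)}) - v_p(U_{z_U(p)})$, which by Proposition \ref{vpu-prop} is bounded by a small absolute constant for $p$ odd and by $O(v_2(U_{2z_U(2)}))$ for $p = 2$. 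Thus, if $U_n$ has no characteristic factor, every prime dividing $M_n$ must divide $n$, and the absolute value $|M_n|$ is dominated by a polynomial in $n$ depending only on $n$ (and on the $2$-adic behavior of $U$).

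Next I would produce a matching exponential lower bound on $|M_n|$ from the cyclotomic form
\begin{equation*}
    M_n = \prod_{\zeta} (\alpha - \zeta \beta),
\end{equation*}
where $\zeta$ ranges over the primitive $n$-th roots of unity. Since $D > 0$, $\alpha$ and $\beta$ are real with (after relabeling) $|\alpha| > |\beta|$, and the triangle inequality already yields $|M_n| \geq (|\alpha| - |\beta|)^{\varphi(n)}$. Comparing this with the polynomial upper bound gives a contradiction as soon as $\varphi(n)\log(|\alpha| - |\beta|)$ exceeds the logarithm of the intrinsic bound, producing the threshold.

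The main obstacle will be calibrating the lower bound sharply enough to yield exactly $n > 12$, uniformly in $(P,Q)$. The extremal case is Fibonacci, where $\alpha = (1+\sqrt{5})/2$ and $|\alpha| - |\beta| = 1$, so the triangle-inequality bound collapses; here one has to refine the estimate using $\prod_\zeta |1 - \zeta\beta/\alpha| = |\Phi_n(\beta/\alpha)|$ and invoke classical lower bounds for cyclotomic polynomials evaluated inside the unit disk, together with a finite direct check for the intermediate values of $n$. Sharpness of the bound $n > 12$ is witnessed by $F_{12} = 144 = 2^4 \cdot 3^2$, where $2$ and $3$ already occur as $F_3 = 2$ and $F_4 = 3$ respectively, so $F_{12}$ has no primitive prime divisor.
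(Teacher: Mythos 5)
First, a point of comparison: the paper does not prove this statement at all. Theorem \ref{cml-thm1} is Carmichael's 1913 theorem, imported from \cite{Car} as an external input (alongside Theorem \ref{cml-thm2} of Bilu--Hanrot--Voutier) and used as a black box in the proof of Theorem \ref{int-thm}, so there is no in-paper argument to measure yours against. Your outline does follow Carmichael's original strategy: isolate the cyclotomic factor $M_n$ of $U_n$, show that any non-characteristic prime divisor of $M_n$ must divide $n$, and then contradict this with a lower bound on $|M_n|$. The structural half is essentially sound, and could even be tightened: for $n>12$ at most \emph{one} prime can divide $M_n$ non-characteristically (the exceptional interactions, namely $p=2$ with $n=2z_U(2)\le 6$ and the pair $p=2$, $q=3$ forcing $n\in\{6,12\}$, all fall below the threshold), and it occurs to the first power, since for odd $p$ Proposition \ref{vpu-prop} gives $v_p(U_{pz_U(p)})-v_p(U_{z_U(p)})=1$ exactly, not merely ``a small absolute constant,'' while your worry about $O(v_2(U_{2z_U(2)}))$ is vacuous for $n>12$.

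The genuine gap is in the size step, and you have located it yourself without filling it. The triangle-inequality bound $|M_n|\ge(|\alpha|-|\beta|)^{\varphi(n)}$ is worthless whenever $|\alpha|-|\beta|=1$, which happens for every $U(P,Q)$ with $|P|=1$ and $Q<0$ --- not just Fibonacci --- so the ``extremal case'' is an entire family, and the entire quantitative content of the theorem (that the threshold is exactly $12$, uniformly in $(P,Q)$) lives in the refined estimate you defer to ``classical lower bounds for cyclotomic polynomials evaluated inside the unit disk, together with a finite direct check.'' That estimate --- in Carmichael's paper, a careful lower bound for $\prod_{\zeta}|\alpha-\zeta\beta|$ against the largest prime factor of $n$, split according to the sign of $Q$ --- \emph{is} the proof; without it you have shown only that non-characteristic primes are scarce in $M_n$, not that they cannot exhaust it. As written, the proposal is an accurate road map of Carmichael's argument rather than a proof: the structural reduction is complete, the analytic core is asserted, not established.
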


\begin{theorem}[\cite{Bilu}] \label{cml-thm2}
    If $U$ is nondegenerate and regular, with $D < 0$, then $U_n$ has a characteristic factor for all $n > 30$.
\end{theorem}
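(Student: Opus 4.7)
The plan is to reformulate the statement in terms of the cyclotomic numbers $M_n = \beta^{\varphi(n)}\Phi_n(\alpha/\beta)$. Theorem \ref{vpM-thm} implies that, in the regular case, the only primes that can divide $M_n$ without being characteristic factors of $U_n$ are the prime divisors of $n$ itself, and each such ``intrinsic'' prime contributes $v_p(M_n) \leq 1$ (with a mild exception at $p=2$). Hence if $U_n$ admits no characteristic factor, then $|M_n|$ is bounded above by a quantity of size $O(n)$, essentially the radical of $n$. The strategy is then to contradict this for $n > 30$ by producing a matching lower bound on $|M_n|$.

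For the lower bound one exploits the factorization $M_n = \prod_{\zeta}(\alpha - \zeta \beta)$, with $\zeta$ ranging over primitive $n$-th roots of unity. When $D > 0$, one has $\alpha > |\beta|$ and every factor is bounded below by $\alpha - |\beta|$, producing exponential growth in $\varphi(n)$; this is essentially Carmichael's original argument for Theorem \ref{cml-thm1}. When $D < 0$, however, $\alpha$ and $\beta$ are complex conjugates with $|\alpha| = |\beta| = |Q|^{1/2}$, so writing $\beta/\alpha = e^{2\pi i \theta}$, the individual factors $|1 - \zeta \beta/\alpha|$ can be arbitrarily small, and their product can only be estimated from below via an effective irrationality measure for $\theta$. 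This is precisely where the theory of linear forms in logarithms enters: applying the sharp estimates of Laurent--Mignotte--Nesterenko for linear forms in two complex logarithms to $\log(\alpha/\beta)$ yields an effective lower bound of the shape $|M_n| \geq C(\alpha, \beta)\, n^{-\kappa}\, |Q|^{\varphi(n)/2}$ with explicit constants $C, \kappa > 0$.

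Combining the two estimates yields an effective threshold $n_0$ beyond which no defective $U_n$ can occur. The main obstacle is that a crude application of Baker-type bounds produces $n_0$ in the range of the hundreds or low thousands, well above the sharp value $30$. Reducing to $n_0 = 30$ requires two further efforts: first, a careful refinement of the linear-form estimates that treats pairs $(\alpha, \beta)$ of small height separately, exploiting the fact that $|Q|$ is then bounded; second, an extensive computer enumeration of all regular Lucas pairs $(P, Q)$ with $D < 0$ and small parameters, checking each of the finitely many surviving indices $n \leq n_0$ by direct computation. The sharpness of $n = 30$ is witnessed by explicit exceptional pairs, such as $(P, Q) = (1, 2)$, which realizes a defective term at $n = 30$; the computer search is what certifies that no further sporadic examples exist. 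It is this interplay between the transcendence-theoretic bound and the finite verification --- together with the parallel treatment of the associated Lehmer pairs, which share the same cyclotomic arithmetic --- that constitutes the technical heart of the Bilu--Hanrot--Voutier argument.
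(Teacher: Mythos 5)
The paper does not prove this statement at all: it is imported verbatim from Bilu--Hanrot--Voutier \cite{Bilu}, and the surrounding text explicitly says that the detailed case analysis is being omitted. So there is no in-paper argument to compare yours against; the only fair assessment is of your sketch on its own terms, measured against the actual proof in the cited reference.

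On those terms, your outline is an accurate description of the architecture of the Bilu--Hanrot--Voutier argument --- the reduction to bounding $|M_n|$ above by roughly the largest prime factor of $n$ when $U_n$ is defective, the failure of Carmichael's elementary lower bound when $\alpha/\beta$ lies on the unit circle, the use of Laurent--Mignotte--Nesterenko estimates for linear forms in two logarithms, the separate treatment of small-height (Lehmer) pairs, and the terminal computer verification; the example $(P,Q)=(1,2)$ with a defective term at $n=30$ is indeed the witness that the bound is sharp. But as a proof it has a decisive gap: every quantitatively hard step is named rather than executed. You do not state or apply the explicit linear-forms inequality, do not derive the initial threshold $n_0$, do not carry out the descent from $n_0$ to $30$, and do not specify or certify the finite enumeration. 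Nothing in what you have written could be checked or reconstructed by a reader; it is a correct roadmap of a roughly fifty-page paper plus computations, not an argument. In the context of this paper the honest move is the one the authors make --- cite \cite{Bilu} --- and if you want to go further than that, the piece you would actually need to supply is the explicit lower bound $|M_n| \geq C(\alpha,\beta)\, n^{-\kappa}|Q|^{\varphi(n)/2}$ together with the verification that it exceeds the upper bound for all $n$ beyond an explicit, computationally accessible threshold.
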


Both of these theorems and their proofs rely on the hypothesis that $U$ is regular, but in fact the finiteness result can be extended to irregular Lucas sequences, although it is no longer possible to give universal bounds on the largest index admitting no characteristic factors. This observation, recorded in the following proposition, does not seem to have been written down anywhere in full generality, but the proof given by Durst (\cite{Durst}) for the real case ($D > 0$) makes use of that condition only insofar as the corresponding situation for regular sequences with $D < 0$ was still totally unresolved at that time (see also \cite{Bilu}, \cite{Lekk}, \cite{Ward}).

\begin{proposition} \label{charfact-prop}
If $U$ is a nondegenerate Lucas sequence, then $U_n$ has a characteristic factor for all but finitely many $n \geq 1$.
\end{proposition}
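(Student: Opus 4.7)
The plan is to adapt the argument of Durst \cite{Durst} to irregular sequences, combining it with Carmichael's theorem (Theorem \ref{cml-thm1}) and its complex extension (Theorem \ref{cml-thm2}). As noted just above the statement, Durst's use of $D>0$ was only to invoke the then-missing complex Carmichael theorem; with both theorems available, the method applies uniformly. The novelty in the irregular case is controlling the contributions of primes dividing $(P,Q)$, which Theorem \ref{vpM-thm}(d)--(e) quantifies explicitly.

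First, using Theorem \ref{vpM-thm}, I would split $M^U_n = C_n \cdot S_n$, where $C_n = \prod_{z_U(p)=n} p^{v_p(M^U_n)}$ assembles the characteristic factors of $U_n$ and $S_n$ is the spurious complement. Theorem \ref{vpM-thm} tells us that every prime contributing to $S_n$ falls into one of a few explicit patterns: $p \mid (P,Q)$; or $p\mid D$ and $n=p^k$; or $p\nmid QD$, $p\nmid (P,Q)$ and $n=p^k z_U(p)$ for some $k\geq 1$ with $z_U(p)<n$. The existence of a characteristic factor of $U_n$ is equivalent to $C_n>1$, equivalently $|M^U_n|>|S_n|$.

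Next, I would bound $|S_n|$ from above. The primes of the last two categories are sparsely distributed and contribute at most a polynomial factor in $n$, using Proposition \ref{vpu-prop}(b)--(c) together with the observation that $p\mid n$ in the third category. The primes of the first category are finite in number, but each contributes $p^{v_p(P)\varphi(n)}$ or $p^{\lfloor \varphi(n)/2\rfloor v_p(Q)}$ by Theorem \ref{vpM-thm}(d)--(e), an exponential factor in $\varphi(n)$. Against this, I would lower-bound $|M^U_n| = |\beta|^{\varphi(n)}|\Phi_n(\alpha/\beta)|$ for $n>1$: in the real case $D>0$, elementary estimates on cyclotomic polynomials evaluated off the unit circle give $|M^U_n|\geq |\alpha|^{(1-o(1))\varphi(n)}$; in the complex case $D<0$, this requires the Baker-type lower bounds on linear forms in logarithms that underlie Theorem \ref{cml-thm2}. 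The relation $|\alpha|\geq p^{v_p(P)}|\alpha^{(p)}|$, valid in case (d) of Proposition \ref{vpu-prop} after factoring $p^{v_p(P)}$ out of $\alpha,\beta$, together with a suitable analogue in case (e), shows that the dominant factor in $|M^U_n|$ absorbs the exponential factor in $|S_n|$, leaving a growing quotient.

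Comparing these bounds yields $|C_n|>1$ for all but finitely many $n$, which is the desired conclusion. The hard part will be the complex-case lower bound on $|\Phi_n(\alpha/\beta)|$ when $\alpha/\beta$ lies on the unit circle; its strength is essentially that of Theorem \ref{cml-thm2} itself, which is precisely why Durst could only treat $D>0$. Everything else --- the classification of spurious primes and the reduction of the exponential factor from $p\mid (P,Q)$ against $|\alpha|$ --- is bookkeeping with the explicit valuation formulas already assembled in Theorem \ref{vpM-thm} and Proposition \ref{vpu-prop}.
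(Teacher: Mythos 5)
The paper does not actually write out a proof of this proposition: it rests entirely on the remark preceding the statement, namely that Durst's argument carries over once Theorem \ref{cml-thm2} supplies the previously missing regular complex case. Your sketch is therefore a reconstruction of that argument, and its overall shape --- compare $|M^U_n|$ against the non-characteristic part $|S_n|$, classified via Theorem \ref{vpM-thm} --- is the right one. Your treatment of the sparse categories is fine: each such prime divides $n$ and contributes a bounded power, so their total contribution is $n^{O(1)}$, and you correctly locate the transcendence input ($D<0$) in the lower bound for $|\Phi_n(\alpha/\beta)|$.

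The genuine gap is at the step you wave through, namely that ``the dominant factor in $|M^U_n|$ absorbs the exponential factor in $|S_n|$, leaving a growing quotient.'' By Theorem \ref{vpM-thm}\ref{vpm3}--\ref{vpm4}, a prime $p \mid (P,Q)$ contributes, per unit of $\varphi(n)$, the base $p^{c_p}$ with $c_p = v_p(P) \leq v_p(Q)/2$ in case \ref{vpm3} and $c_p = v_p(Q)/2$ in case \ref{vpm4}, so the exponential part of $|S_n|$ has base $\prod_p p^{c_p} \leq |Q|^{1/2}$. But when $D<0$ one has $|\alpha|=|\beta|=|Q|^{1/2}$ \emph{exactly}, so the two exponentials can share the same base; the quotient then collapses to $|\Phi_n(\alpha/\beta)|$ times bounded corrections, and the Baker--Schinzel-type bound only gives $|\Phi_n(\alpha/\beta)| \gg n^{-C}$, which tends to $0$, not to infinity. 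Closing this requires an extra argument: equality $\prod_p p^{c_p} = |Q|^{1/2}$ forces $v_p(Q) \leq 2v_p(P)$ for every $p \mid Q$, i.e.\ $Q \mid P^2$, which together with $4Q > P^2$ leaves $Q \in \{P^2, P^2/2, P^2/3\}$, and each of these makes $\alpha/\beta$ a root of unity, contradicting nondegeneracy; alternatively, one can first divide out the largest $d$ with $d \mid P$, $d^2 \mid Q$ via $U_n(P,Q)=d^{\,n-1}U_n(P/d,Q/d^2)$ and transfer characteristic factors from the reduced sequence. Relatedly, your inequality $|\alpha| \geq p^{v_p(P)}|\alpha^{(p)}|$ is not what factoring out $p^{v_p(P)}$ produces: the reduced root satisfies $X^2-\partial_p(P)X+Q/p^{2v_p(P)}$, not $X^2-\partial_p(P)X+\partial_p(Q)$, and its modulus can equal $1$, in which case nothing is left over to grow. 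The statement is true and your strategy can be completed, but the step you treat as bookkeeping is precisely where nondegeneracy must be invoked.
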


\section{Proofs}

\begin{proof}[Proof of Theorem \ref{dbl-thm} \\]
Suppose first that $n$ is odd. Then by the doubling formula in Lemma \ref{dbl-lem} and the definition of the sequences $M^U$, $M^V$,
\begin{align*}
    M^U_{2n} &=\prod_{d \mid 2n} U_d^{\mu(2n/d)} = \prod_{d \mid n}(U_{2d}/U_d)^{\mu(n/d)} = \prod_{d \mid n}V_d^{\mu(n/d)} = M^V_n.
\end{align*}
If $n$ is even, write $n = 2^kN$ where $N$ is odd. Then similarly
\begin{align*}
    M^U_{2n} &=\prod_{d \mid N} (U_{2^{k+1}d}/U_{2^k d})^{\mu(N/d)} = \prod_{d \mid N}V_{2^{k}d}^{\mu(N/d)} = M^V_n \prod_{d \mid N}V_{2^{k-1}d}^{\mu(N/d)} \\ &=M^V_n \prod_{d \mid N}(U_{2^ k  d}/U_{2^{k-1}d})^{\mu(N/d)} = M^V_nM^U_n.
\end{align*}
\end{proof}

\begin{proof}[Proof of Theorem \ref{vpM-thm}]

The proof in each case consists of involved but routine calculations using the valuations in Proposition \ref{vpu-prop} and the identity
\begin{equation*}
    v_p(M^U_n)=\sum_{d \mid n}\mu(n/d)v_p(U_d),
\end{equation*}
and, in particular,
\begin{equation*}
    v_p(M^U_{p^k N})= \sum_{d \mid N}\mu(N/d)(v_p(U_{p^k d})-v_p(U_{p^{k-1}d}))
\end{equation*}
for $p, k, N \geq 1$ with $p$ prime and $(p,N)=1$. We present the details in full only for the first nontrivial case, subsequently including only the points that require more careful consideration.

\ref{vpM0} Obvious.

\ref{vpM1} It is clear that $v_p(M^U_n)=0$ if $(p,n)=1$. Write $n = p^kN$ with $(p,N)=1$. If $k > 1$, then
\begin{equation*}
    v_p(U_{p^kd})-v_p(U_{p^{k-1}d})=1    
\end{equation*}
for all $d \mid N$, so
\begin{align*}
        v_p(M^U_n) = \sum_{d \mid N}\mu(d)=
        \begin{cases}
            1, &\text{if } N = 1, \\
            0, &\text{if } N > 1.
        \end{cases}
\end{align*}
If $k = 1$, then
\begin{equation*}
    v_p(U_{pd})-v_p(U_{d})=v_p(U_p)    
\end{equation*}
for all $d \mid N$, so \begin{align*}
        v_p(M^U_n) = \sum_{d \mid N}\mu(d)v_p(U_p)=
        \begin{cases}
            v_p(U_p), &\text{if } N = 1, \\
            0, &\text{if } N > 1.
        \end{cases}
\end{align*}

\ref{vpM2} It is clear that $v_p(M^U_n)=0$ if $z_U(p) \nmid n$. Suppose $n = z_U(p)N$ for some integer $N \geq 1$. Then
\begin{align*}
        v_p(M^U_n) = \sum_{d \mid z_U(p)N,~ z_U(p) \mid d}\mu(z_U(p)N/d)v_p(U_{d}) 
        = \sum_{d \mid N}\mu(N/d)v_p(U_{z_U(p)d}).
\end{align*}
Since $p \nmid D$, also $p \nmid z_U(p)$; if $p \nmid N$, then
\begin{equation*}
    v_p(U_{z_U(p)d})=v_p(U_{z_U(p)})    
\end{equation*}
for all $d\mid n$. If $p \mid N$, we rewrite this as $n = p^kz_U(p)N$ where $N \geq 1$, $(p,N)=1.$ Then
\begin{equation*}
    v_p(M^U_n) = \sum_{d \mid N}\mu(N/d)\left(v_p(U_{z_U(p)p^kd})-v_p(U_{z_U(p)p^{k-1}d}) \right),
\end{equation*}
and the remainder of the argument proceeds as in the previous case from the valuations in Proposition \ref{vpu-prop}.

\ref{vpm3} Evidently $v_p(M^U_1)=0$ for all primes $p$. For $n > 1$, after cancelling constant terms, we have
\begin{align*}
    v_p(M^U_n) &=v_p(P)\sum_{d \mid n}\mu(n/d)d+\sum_{d \mid n}\mu(n/d)v_p(U^{(p)}_d) \\
    &=\varphi(n)v_p(P)+v_p(M^{U^{(p)}}_n).
\end{align*}

\ref{vpm4} We always have the trivial cases $v_p(M^U_1)=0$, $v_p(M^U_2)=v_p(P)$. For $n > 1$ odd, ignoring constant terms,
\begin{equation*}
    v_p(M^U_n)=\frac{1}{2}v_p(Q)\sum_{d \mid n}\mu(n/d)d=(\varphi(n)/2)v_p(Q).
\end{equation*}
For even $n$, we assume first that $p > 3$ or $2v_p(P)-v_p(Q)>1$. If $n=2N$, $N$ odd, we have
\begin{equation*}
    v_p(U_{2d})-v_p(U_d) = \frac{1}{2}v_p(Q)d+v_p(d)+\text{a constant}
\end{equation*}
for all $d \mid N$, so
\begin{equation*}
    v_p(M^U_{n})=\frac{1}{2}\varphi(N)v_p(Q)+\sum_{d \mid N}\mu(N/d)v_p(d),
\end{equation*}
where
\begin{equation*}
    \sum_{d \mid N}\mu(N/d)v_p(d)=
    \begin{cases}
        1, &\text{if } n = p^k,~k\geq 1, \\
        0, &\text{otherwise}.
    \end{cases}
\end{equation*}
If $n=2^kN$ with $k \geq 2$, $N \geq 1$ odd, then
\begin{equation*}
    v_p(U_{2^k d})-v_p(U_{2^{k-1}d})=2^{k-2}v_p(Q)d+\text{a constant}
\end{equation*}
for all $d \mid N$.

Finally, if $p = 2 \text{ or } 3$, and $2v_p(P)=v_p(Q)+1$, we consider separately only the exceptional case $n=2p$. If $p = 2$, we have
\begin{align*}
    v_2(U_4) &=v_2(Q)v_2(P)+1+v_2(\partial_2(P)^2-\partial_2(Q)), \\
    v_2(U_2) &= v_2(P),
\end{align*}
so
\begin{align*}
    v_2(M^U_4) = v_2(U_4)-v_2(U_2)=v_2(Q)+1+v_2(\partial_2(P)^2-\partial_2(Q)).
\end{align*}
If $p = 3$, then
\begin{align*}
    v_3(U_6) &= 2v_3(Q)+v_3(P)+1+v_3(\partial_3(P)^2-\partial_3(Q)), \\
    v_3(U_3) &= v_3(Q), ~
    v_3(U_2) = v_3(P),
\end{align*}
so
\begin{equation*}
    v_3(M^U_6)=v_3(U_6)-v_3(U_3)-v_3(U_2) =v_3(Q)+1+v_3(\partial_3(P)^2-\partial_3(Q)).
\end{equation*}

\end{proof}

\begin{proof}[Proof of Theorem \ref{int-thm}]
    It is well known that $M^U_n \in \mathbb{Z}$ for all $n \geq 1$. One way to see this is to note that the valuations in Theorem \ref{vpM-thm} are always nonnegative; but this is overkill! Instead, it is enough to observe that the left-hand side of the identity 
    \begin{equation*}
        \prod_{d \mid n}U_d^{\mu(n/d)}=M^U_n=\beta^{\varphi(n)}\Phi_n(\alpha/\beta)
    \end{equation*}
    for $n > 1$ is rational, while the right-hand side is an algebraic integer. Obviously, $M^U_1=1 \in \mathbb{Z}$.
    
    Turning to the sequence $M^V$, if $n \geq 1$ is odd, then doubling formula in Proposition \ref{dbl-thm} shows that also
    \begin{equation*}
        M^V_n = M^U_{2n} \in \mathbb{Z}.    
    \end{equation*}
    Consider any even $n \geq 2$, and suppose $n = z_U(p)$ for some odd prime $p$. Then $v_p(M^U_n) > 0$ and $v_p(M^U_{2n}) = 0$ by Theorem \ref{vpM-thm}, so, again by the doubling formula,
    \begin{equation*}
        M^V_n = M^U_{2n}/M^U_n \in \mathbb{Q} \setminus\mathbb{Z}.
    \end{equation*}
    Proposition \ref{charfact-prop} shows that this hypothesis holds for all but finitely many $n \geq 1$, and Theorems \ref{cml-thm1} and \ref{cml-thm2} establish bounds on the largest $n \geq 1$ at which it can fail if $U$ is regular.
\end{proof}

\section{Conclusion}

We conclude with a brief discussion of Wall-Sun-Sun primes; in fact, the observations in this section do not rely on valuations more general than those already obtained by Carmichael, but the rather elegant characterization of Wall-Sun-Sun primes below does not seem to have been mentioned elsewhere in the literature. Recall that a Wall-Sun-Sun prime is a prime number that a prime number satisfying $v_p(F_{z_F(p)}) > 1$ (see \cite{Sun1}). It is not known whether or not any such primes exist, although it has been established that there are no Wall-Sun-Sun primes smaller than $9.7 \times10^{14}$ (see \cite{Dor}). In light of the valuations in the previous section, we have the following equivalence.

\begin{corollary}\label{WSS-cor}
    The following two statements are equivalent.
    \begin{enumerate}[label = (\alph*)]
        \item There are no Wall-Sun-Sun primes.
        \item The numbers $M^F_n$ are squarefree for all $n \neq 6$.
    \end{enumerate}
\end{corollary}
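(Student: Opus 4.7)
The plan is to specialize Theorem \ref{vpM-thm} to the Fibonacci sequence $F = U(1,-1)$, with $P = 1$, $Q = -1$, $D = 5$, and then read off exactly when $v_p(M^F_n) \geq 2$ can occur. Since $(P, Q) = 1$, the irregular cases \ref{vpm3} and \ref{vpm4} of Theorem \ref{vpM-thm} never apply, and because $Q = -1$ is a unit, case \ref{vpM0} is also vacuous. So for each prime $p$ we land in exactly one of two subcases: case \ref{vpM1} when $p = 5$, and case \ref{vpM2} when $p \neq 5$. The strategy is to enumerate every pair $(n, p)$ for which $v_p(M^F_n) \geq 2$, and then to observe that this list cleanly separates the exceptional index $n = 6$ from the Wall-Sun-Sun condition.

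For $p = 5$, case \ref{vpM1} combined with $v_5(F_5) = 1$ gives $v_5(M^F_n) \in \{0, 1\}$ for every $n$, so the prime $5$ never contributes a squared factor. For $p = 2$, case \ref{vpM2} applies with $z_F(2) = 3$; since $v_2(F_6) - v_2(F_3) = 3 - 1 = 2$, we obtain $v_2(M^F_6) = 2$, while $v_2(M^F_n) \leq 1$ for all other $n$. For any odd prime $p \neq 5$, case \ref{vpM2} again applies, but the sharpening $v_p(F_{p z_F(p)}) = v_p(F_{z_F(p)}) + 1$ from Proposition \ref{vpu-prop} (valid for $p > 2$) collapses the middle subcase to $v_p(M^F_{p z_F(p)}) = 1$; thus the sole candidate for $v_p(M^F_n) \geq 2$ is $n = z_F(p)$, where $v_p(M^F_{z_F(p)}) = v_p(F_{z_F(p)})$.

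Combining these three substeps, $M^F_n$ fails to be squarefree if and only if either $n = 6$ or there exists an odd prime $p \neq 5$ with $z_F(p) = n$ and $v_p(F_{z_F(p)}) \geq 2$. Because $v_2(F_3) = v_5(F_5) = 1$, neither $2$ nor $5$ is a Wall-Sun-Sun prime, so the latter clause is exactly ``$n = z_F(p)$ for some Wall-Sun-Sun prime $p$''. To sharpen this into the stated equivalence I still need the auxiliary remark that no Wall-Sun-Sun prime $p$ can have $z_F(p) = 6$; but $p \mid F_{z_F(p)} = F_6 = 8$ forces $p = 2$, and $z_F(2) = 3 \neq 6$, so this never happens. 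Hence the indices $n \neq 6$ at which $M^F_n$ is not squarefree correspond bijectively to the Wall-Sun-Sun primes via $p \mapsto z_F(p)$, and the equivalence follows in both directions.

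I do not anticipate serious obstacles: the content is essentially a bookkeeping exercise in Theorem \ref{vpM-thm} once $F$ has been plugged in. The only genuinely subtle point is the $p = 2$ anomaly at $n = 6$, which arises from the strict inequality $v_2(F_{2 z_F(2)}) > v_2(F_{z_F(2)}) + 1$ permitted in Proposition \ref{vpu-prop}; this turns out to be the unique non-Wall-Sun-Sun source of a repeated prime factor in $M^F_n$, and thereby explains the exceptional index appearing in the statement of the corollary.
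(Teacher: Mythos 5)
Your argument is correct and is precisely the intended one: the paper offers no written proof beyond the remark that the corollary follows ``in light of the valuations'' of Theorem \ref{vpM-thm}, and your case analysis (with $v_5(F_5)=v_2(F_3)=1$ ruling out $p=2,5$ as Wall-Sun-Sun primes, $v_2(M^F_6)=v_2(F_6)-v_2(F_3)=2$ isolating the exceptional index, and the equality $v_p(F_{pz_F(p)})=v_p(F_{z_F(p)})+1$ for odd $p$ collapsing the middle subcase) is exactly the bookkeeping that remark presupposes. Your additional check that no Wall-Sun-Sun prime can have entry point $6$ is a genuinely necessary step for the direction (b)$\Rightarrow$(a) that the paper leaves implicit, so your write-up is if anything more complete than the source.
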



\end{document}